\numberwithin{equation}{section}
\theoremstyle{plain}
\newtheorem{Th}{Theorem}
\newtheorem{Lemma}[Th]{Lemma}
\newtheorem{Cor}[Th]{Corollary}
\newtheorem{Pro}[Th]{Proposition}
 \theoremstyle{definition}
\newtheorem{?}[Th]{Problem}
\newcommand{\la}{\lambda}
\DeclareMathOperator{\diag}{diag}
\newcommand{\C}{\mathbb{C}}
\begin{document}

\title[Identities for matrices over the Grassmann algebra]
{Polynomial identities for matrices over the Grassmann algebra}

\author[P. E. Frenkel]{P\'eter E. Frenkel}

\address{E\"{o}tv\"{o}s Lor\'{a}nd University \\ Department of Algebra and Number Theory
 \\ H-1117 Budapest
 \\ P\'{a}zm\'{a}ny P\'{e}ter s\'{e}t\'{a}ny 1/C \\ Hungary \\ and R\'enyi Institute of Mathematics, Hungarian Academy of Sciences \\ 13-15 Re\'altanoda utca \\ H-1053 Budapest}
\email{frenkelp@cs.elte.hu}

\thanks{Research partially supported by ERC Consolidator Grant 648017 and by Hungarian National Foundation for Scientific Research (OTKA),  grants no.\ K109684 and K104206.}

 \subjclass[2010]{}

 \keywords{}

\begin{abstract}We determine minimal Cayley--Hamilton and Capelli identities for matrices over a Grassmann algebra of finite rank. For minimal standard identities, we give lower and upper bounds on the degree.  These results  improve on upper bounds given by  L.\ M\'arki, J.\ Meyer, J.\ Szigeti, and L.\ van Wyk in a  recent paper.
\end{abstract}

\maketitle

\section{Notations}
Let $R$ be a commutative ring with 1.
For $m\ge 0$, consider the Grassmann algebra \[E^m=R\langle v_1,\dots, v_m\rangle/(v_k^2, v_iv_j+v_jv_i\;|\; 1\le k\le m, 1\le i<j\le m)\] of rank $m$.
It is a  graded $R$-algebra (each $v_i$ has degree 1).  We write $E^m_i$ for the degree $i$ component, so $$E^m=\bigoplus_{i=0}^mE^m_i, \qquad E_0^m=R, \quad E_m^m=Rv_1\cdots v_m.$$
We write $$E^m_{\ge r}=\bigoplus_{i=r}^m E_i^m.$$
Let $M_nX$ be the set of $n$-square matrices with entries in the set $X$.

\section{Cayley--Hamilton identity}
L.\ M\'arki, J.\ Meyer, J.\ Szigeti, and L.\ van Wyk have shown \cite[3.4 Theorem]{MMSzW} that
if the base ring $R$ is a  field of characteristic zero and $m\ge 2$, then any
element of $M_nE^m$ satisfies a monic polynomial of degree $n\cdot 2^{m-1}$
over $R$. This was achieved by constructing  a CT-embedding of $E^m$ into a $2^{m-1}$-square matrix algebra over a suitable commutative $R$-algebra --- an interesting result in its own right. In the present paper, we allow $R$ to be any commutative ring with 1 and reduce the degree of the monic polynomial from $n\cdot 2^{m-1}$ to $n\cdot \left(\lceil m/2\rceil+1\right)$, which turns out to be least possible in general. Moreover, a  suitable polynomial of this degree is given explicitly. For the proof, we do not use CT-embeddings. Instead, we directly exploit the nilpotency and supercommutativity properties of the Grassmann algebra.

We now set up notation that will be used throughout this section.
Let $A\in M_nE^m$. We decompose $A$ into its homogeneous components:
$$A=\sum_{i=0}^mA_i, \qquad A_i\in M_nE^m_i.$$  Let $$f(x)=\det (xI-A_0)\in
R[x]$$ be the (monic) characteristic polynomial of $A_0\in M_nR$.  The main
result of this paper is

\begin{Th}\label{CH}
For any $A\in M_nE^m$, we have \[f(A)^{\lceil m/2\rceil+1}=0.\]
\end{Th}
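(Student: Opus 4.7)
The plan is to expand $f(A) = f(A_0 + B)$ with $B = A - A_0 \in M_n E^m_{\ge 1}$ as a noncommutative Taylor series $f(A) = \sum_{j \ge 0} D_j$, where
\[
D_j = \sum_k c_k \sum_{i_0+\cdots+i_j = k-j} A_0^{i_0} B A_0^{i_1} \cdots B A_0^{i_j}
\]
collects the words containing exactly $j$ copies of $B$. The classical Cayley--Hamilton identity gives $D_0 = f(A_0) = 0$, and $D_j \in M_n E^m_{\ge j}$ since each $B$-factor contributes at least $1$ to the Grassmann degree. Writing $B = \sum_S B_S v_S$ over nonempty $S \subseteq \{1,\dots,m\}$ with $B_S \in M_n R$, the linear term reads $D_1 = \sum_S Q_S v_S$ with $Q_S = \sum_k c_k \sum_i A_0^i B_S A_0^{k-1-i}$.

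The crucial structural observation is that each $Q_S$ is a polynomial in $A_0$. Indeed, over $R[\epsilon]/(\epsilon^2)$ the matrix $A_0 + \epsilon B_S$ has characteristic polynomial $f + \epsilon f_{B_S}$ for a suitable $f_{B_S} \in R[x]$; applying Cayley--Hamilton and extracting the $\epsilon$-coefficient yields $Q_S = -f_{B_S}(A_0) \in R[A_0]$. Hence the $Q_S$ pairwise commute, and combined with supercommutativity $v_i v_j + v_j v_i = 0$ and $v_i^2 = 0$, an antisymmetrization gives $(f(A)_1)^2 = \sum_{i,j} Q_{\{i\}} Q_{\{j\}} v_i v_j = 0$ for the degree-$1$ component.

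To reach the sharp exponent $L = \lceil m/2\rceil + 1$, I would split $f(A) = X + Y$ into its odd and even parts. Since $f(A) \in M_n E^m_{\ge 1}$ and an even element of $E^m_{\ge 1}$ automatically lies in $E^m_{\ge 2}$, we have $Y \in M_n E^m_{\ge 2}$, so $Y^k \in M_n E^m_{\ge 2k}$. For the odd part $X = X^{(1)} + X^{(3)} + \cdots$, the vanishing $(X^{(1)})^2 = (f(A)_1)^2 = 0$ forbids two consecutive degree-$1$ factors in any product, so the minimum Grassmann degree of $X^k$ is $2k - [k\text{ odd}]$ (achieved by alternating $X^{(1)}, X^{(3)}, X^{(1)}, \ldots$). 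Combining these degree estimates, every word of length $L$ in $\{X, Y\}$ has Grassmann degree exceeding $m$ except in narrow ``boundary'' shapes.

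The hard part is killing those remaining boundary words. For even $m$ these are of the form $Y^{b_0} X Y^{b_1} X Y^{b_2}$ with all $Y$'s of degree $2$ and both $X$'s of degree $1$, attaining exactly degree $m$, so their vanishing is not a pure degree statement. It requires finer identities beyond $(f(A)_1)^2 = 0$: the symmetric combinations of noncommutative second derivatives of $f$ at $A_0$ (obtained from a two-parameter Cayley--Hamilton expansion in directions $B_j, B_k$) are again polynomials in $A_0$, and the antisymmetric parts satisfy Jacobi-type commutator relations. Plugging these into the expansion of $X^{(1)} Y^{(2)} X^{(1)}$ should produce the needed cancellations among the maximal-degree terms. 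An alternative avenue is induction on $m$ via $A = A' + A''v_{m+1}$ with $A', A'' \in M_n E^m$: one checks that $f(A) = f(A') + G v_{m+1}$ for $G \in M_n E^m$ satisfying the intertwining identity $A'G - G\iota(A') = f(A')A'' - A''f(\iota(A'))$, where $\iota$ is the Grassmann parity involution, and uses this to control the cross term in $f(A)^L$.
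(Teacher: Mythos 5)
Your opening moves are sound and partly parallel the paper: the reduction $D_0=f(A_0)=0$, the fact that $f(A)\in M_nE^m_{\ge 1}$ so the even part lies in $M_nE^m_{\ge 2}$, and the elegant observation that the directional derivative $Q_S$ is a polynomial in $A_0$ (hence the $Q_S$ commute and $(f(A)_1)^2=0$) is a genuinely nice, coordinate-free substitute for the paper's Lemma~\ref{lemma}(1)--(2). But the proof has a real gap exactly where you flag ``the hard part,'' and the gap is larger than you describe. First, your degree bookkeeping is off: it is not true that every length-$L$ word in $\{X,Y\}$ with $L=\lceil m/2\rceil+1$ has Grassmann degree at least $m$ outside of narrow boundary shapes. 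For example, with $m=8$ and $L=5$, the word $X^{(1)}Y^{(2)}X^{(1)}Y^{(2)}X^{(1)}$ has degree $7<m$, so it cannot be killed by any degree count; the same pattern $1,2,1,2,\dots$ gives words of degree roughly $3L/2$, far below $2L-2\ge m$. Second, the mechanism that actually kills all such words --- a commutation rule between the degree-$1$ and the degree-$2$ components of $f(A)$ --- is only conjectured in your writeup (``should produce the needed cancellations,'' ``Jacobi-type commutator relations''), not established. This is the heart of the theorem, not a boundary case.

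The paper's route is to reduce to $R=\C$ with $A_0$ diagonal with distinct eigenvalues (by the integrality of the coefficients, density, and conjugation-invariance), and then to compute $B_1$ and $B_2$ explicitly: $B_1$ is diagonal, and $B_2=B_2^++B_2^-$ splits into a diagonal part commuting with $B_1$ and an off-diagonal part anticommuting with $B_1$ (Lemma~\ref{lemma}(3)--(4)). This is precisely the ``finer identity'' you are missing: it implies that in any nonzero product of homogeneous components, two degree-$1$ factors must be separated by a factor of degree at least $3$, whence the total degree is at least $2k-1$ and $k\le\lceil m/2\rceil$. If you want to complete your approach, you would need to prove an analogue of this $\pm$-splitting of the degree-$2$ component for general $A_0$ (your two-parameter Cayley--Hamilton expansion is a plausible starting point, but the computation is not done), or else adopt the paper's specialization to diagonal $A_0$, after which your $Q_S\in R[A_0]$ observation becomes transparent. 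As it stands, the argument does not yet prove the theorem.
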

For $m=0$, this recovers the Cayley--Hamilton Theorem.

As a first step towards the proof, we decompose $B=f(A)\in M_nE^m$ into its homogeneous components:
$$B=\sum_{i=0}^mB_i, \qquad B_i\in M_nE^m_i.$$
Before attacking Theorem~\ref{CH} in its full generality, we treat a special
case.

\begin{Lemma}\label{lemma}
If the degree zero component $A_0$ of $A\in M_nE^m$ is a diagonal matrix $A_0=\diag(\la_i)_{i=1}^n$ with distinct diagonal elements $\la_i\in R$, and $A_1=(v_{ij})$ with $v_{ij}\in E^m_1$, then \begin{enumerate}

\item[(0)] \[B_0=0,\]

\item[(1)] \[B_1=\diag (f'(\la_i)v_{ii})_{i=1}^n,\]

\item[(2)] \[B_1^2=0.\]

\item[(3)] $$(B_2)_{rs}=\frac1{\la_r-\la_s}\left({f'(\la_r)}v_{rr}+{f'(\la_s)}v_{ss}\right)v_{rs}$$ for $r\ne s$.

\item[(4)] If $B_2^+$ and $B_2^-$ denote the diagonal and  off-diagonal part of $B_2$ respectively, then $B_1$ commutes with $B_2^+$ but anticommutes with $B_2^-$.

\end{enumerate}
\end{Lemma}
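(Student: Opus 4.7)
The plan is to expand $f(A)=\prod_{k=1}^n(A-\la_kI)$ by writing each factor as $D_k+A_1+A_2+\cdots$, with $D_k:=A_0-\la_kI$ diagonal with $(j,j)$-entry $\la_j-\la_k$, and to collect the homogeneous components of the product. Part~(0) is then just the Cayley--Hamilton theorem applied to the diagonal matrix $A_0$: the $(i,i)$-entry of $\prod_k D_k$ contains the zero factor $\la_i-\la_i$.

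For (1), the degree-$1$ piece picks up $A_1$ from exactly one of the $n$ factors, giving
\[B_1=\sum_{k=1}^n\Bigl(\prod_{j<k}D_j\Bigr)A_1\Bigl(\prod_{j>k}D_j\Bigr).\]
Since each $D_j$ is diagonal with scalar entries, the $(r,s)$-entry of the right-hand side equals $v_{rs}$ times $\sum_k\prod_{j<k}(\la_r-\la_j)\prod_{j>k}(\la_s-\la_j)$. A short telescoping (multiply the sum by $\la_r-\la_s$ to obtain $f(\la_r)-f(\la_s)$) identifies this scalar as the divided difference $f[\la_r,\la_s]$, which vanishes for $r\neq s$ because $\la_r,\la_s$ are distinct roots of $f$, and equals $f'(\la_r)$ for $r=s$. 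This establishes (1); then (2) is immediate because $B_1$ is diagonal with entries in $E^m_1$, all of which square to $0$.

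For (3), the degree-$2$ piece picks up either one $A_2$ (which by the same argument contributes $f[\la_r,\la_s](A_2)_{rs}=0$ to $(B_2)_{rs}$ for $r\neq s$) or two $A_1$'s. The two-$A_1$ contribution to $(B_2)_{rs}$ equals $\sum_p f[\la_r,\la_p,\la_s]\,v_{rp}v_{ps}$, by the analogous second-order divided-difference identity
\[\sum_{k<l}\prod_{j<k}(x-\la_j)\prod_{k<j<l}(y-\la_j)\prod_{j>l}(z-\la_j)=f[x,y,z].\]
For $r\neq s$ and $p\notin\{r,s\}$, all three arguments of $f[\la_r,\la_p,\la_s]$ are distinct roots of $f$, so the divided difference vanishes; only the terms $p=r$ and $p=s$ survive, evaluating to $f'(\la_r)/(\la_r-\la_s)$ and $-f'(\la_s)/(\la_r-\la_s)$, and the rewrite $v_{rs}v_{ss}=-v_{ss}v_{rs}$ puts them into the symmetric form of (3).

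For (4), both $B_1$ and $B_2^+$ are diagonal; their entries lie in $E^m_1$ and $E^m_2$ respectively, and since even-degree elements are central in the supercommutative algebra $E^m$, the two matrices commute. For the anticommutation with $B_2^-$, I would compute $(B_1B_2^-)_{rs}$ and $(B_2^-B_1)_{rs}$ for $r\neq s$ directly from the formula in~(3): two of the four resulting monomials contain a factor $v_{rr}^2$ or $v_{ss}^2$ and therefore vanish, and the two survivors reduce to $\pm(\la_r-\la_s)^{-1}f'(\la_r)f'(\la_s)\,v_{rr}v_{ss}v_{rs}$ with opposite signs, after one anticommutation of $v_{rs}$ past $v_{ss}$. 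The main technical input is the pair of Newton-type divided-difference identities used in (1) and (3); once those are in hand, the remainder of the lemma is careful but routine sign bookkeeping in $E^m$.
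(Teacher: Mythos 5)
Your proof is correct, and it follows the same basic strategy as the paper --- expand $B=\prod_{k}(A-\la_kI)$ and collect homogeneous components --- but the bookkeeping device is different. Where you invoke first- and second-order divided differences (the Opitz-type identities $\sum_k\prod_{j<k}(x-\la_j)\prod_{j>k}(y-\la_j)=f[x,y]$ and its two-insertion analogue, both of which vanish when all arguments are distinct roots of $f$ and produce the $f'(\la_r)/(\la_r-\la_s)$ coefficients at repeated arguments), the paper avoids any summation identities: using commutativity of the factors it writes $B=(A-\la_rI)C(A-\la_sI)$ for any $r\ne s$, notes that the $r$-th row of the first factor and the $s$-th column of the last have no degree-zero part, and concludes at once that $(B_1)_{rs}=0$ off the diagonal and that $(B_2)_{rs}=\sum_j v_{rj}v_{js}\prod_{i\ne r,s}(\la_j-\la_i)$, where the scalar coefficient visibly kills all $j\notin\{r,s\}$. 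The paper's route is slicker and computation-free for the vanishing statements; yours is more systematic and would extend mechanically to the higher components $B_i$. Your treatments of (0), (2) and (4) coincide with the paper's (for (4) you and the paper do the same four-monomial cancellation, two terms dying on $v_{rr}^2=0$ or $v_{ss}^2=0$ and the other two cancelling after one anticommutation). The only point worth making explicit is the one you gloss over in (1): the telescoping argument identifies the scalar only for $r\ne s$, and the case $r=s$ needs the separate (one-line) observation that in $\sum_k\prod_{j\ne k}(\la_r-\la_j)$ only the term $k=r$ survives, giving $f'(\la_r)$.
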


\begin{proof}
We have $$f(x)=\prod_{i=1}^n(x-\la_i),$$
whence \begin{equation}\label{B}B=f(A)=\prod_{i=1}^n(A-\la_iI).\end{equation}


  (0) We have $B_0=f(A_0)$, which is zero by the Cayley--Hamilton Theorem, or, if you prefer, by the  trivial computation $$f(A_0)=\prod_{i=1}^n(A_0-\la_iI)=\diag\left(\prod_{i=1}^n(\la_j-\la_i)\right)_{j=1}^n=0.$$


(1)  The factors in \eqref{B} commute, so, for any indices $r\ne s$, we
  have \begin{equation}\label{offdiag}B=(A-\la_rI)C(A-\la_sI)\end{equation}
 for some $C\in M_nE^m$. In the first factor,
  the $r$-th row has no degree zero component. In the last factor, the $s$-th
  column has no degree zero component. Thus, the $(r,s)$ entry in $B$ has no degree
1 component.

The degree 1  component of the $(r,r)$ entry in $B$ arises by taking  a  degree 1
component from the $r$-th row of $A-\la_rI$ and degree zero components from
all other $A-\la_iI$. But the degree zero components of these matrices are
diagonal, so the only possibility is to use the $(r,r)$ entry from each
factor. The result is $$v_{rr}\prod_{i\ne r}(\la_r-\la_i),$$ as claimed.


(2) This is clear because $B_1$ is diagonal and homogeneous of degree 1.


(3)  Using formulas \eqref{B} and \eqref{offdiag}, we obtain \[(B_2)_{rs}=\sum_{j=1}^n v_{rj}v_{js}\prod_{i\ne
  r,s}(\la_j-\la_i)
=v_{rr}v_{rs}\frac{f'(\la_r)}{\la_r-\la_s}+v_{rs}v_{ss}\frac{f'(\la_s)}{\la_s-\la_r},\]
    which yields the result.


(4)  The first statement is clear because $B_1$ and $B_2^+$ are diagonal
  and $B_2^+$ is homogeneous of degree 2 --- note that in the Grassmann algebra, homogeneous elements of even degree are central.

For the second statement, observe that $B_1$ is diagonal and $B_2^-$ is
off-diagonal, so their product, in either order, is off-diagonal. Moreover,
for $r\ne s$, the $(r,s)$ entry in the product is
 $$(B_1B_2^-)_{rs}=(B_1)_{rr}(B_2)_{rs}=f'(\la_r)v_{rr}\frac1{\la_r-\la_s}{f'(\la_s)}v_{ss}v_{rs}$$
for one order and is
$$(B_2^-B_1)_{rs}=(B_2)_{rs}(B_1)_{ss}=\frac1{\la_r-\la_s}{f'(\la_r)}v_{rr}v_{rs}f'(\la_s)v_{ss}$$
for the other order. These add up to zero as claimed.
\end{proof}

\bigskip\noindent\it Proof of Theorem~\ref{CH}. \rm
The coefficients of the polynomial $f(x)$ are polynomials with
integer coefficients in the entries of $A_0$. Hence, the coordinates in the
natural $R$-basis \begin{equation}\label{basis}\{v_{i_1}\cdots v_{i_k} | i_1<\dots <i_k\}\end{equation} of the
entries of the matrix $f(A)^{\lceil m/2\rceil+1}$ are polynomials with integer coefficients in the
coordinates of the entries of $A$. The theorem is equivalent to the statement
that these $n^22^m$ polynomials are all identically zero. Thus, we may assume
that $R=\C$.
We may assume that $A_0$ has $n$ distinct eigenvalues, since such matrices are
dense in $M_n(\C)$.Then  $A_0$ is diagonalizable by an invertible complex
matrix $P$. Since the conjugation by $P$ is an automorphism of $M_nE^m$ as a
graded algebra over $\C$, we may assume that $P=I$, i.e., $A_0$ is diagonal
with distinct diagonal entries.
Then, by Lemma~\ref{lemma}, we have
\[B=\sum_{i=1}^m B_i, \] where $ B_i\in M_nE^m_i$,  $B_1^2=0$, $B_2=B_2^++B_2^-$, and
$B_1B_2^\pm=\pm B_2^\pm B_1$.
If $B^k\ne 0$ for an exponent $k$, then there is a nonzero
product \[B_{i_1}^\bullet\cdots B_{i_k}^\bullet,\] where $i_1,\dots,
i_k\in\{1,\dots, m\}$, and $B_i^\bullet$ means $B_2^\pm$ if $i=2$ and means
$B_i$ otherwise. But then, in the sequence $i_1$, \dots, $i_k$, any
two 1's are separated by at least one $i\ge 3$, whence
\[m\ge i_1+\dots+i_k\ge 2k-1,\]  so $k\le \lfloor (m+1)/2\rfloor=\lceil m/2\rceil.$\hfill\qed

\bigskip

We now show that the degree of the polynomial in  Theorem~\ref{CH} cannot be
reduced.

\begin{Pro}  Let $R$ be a field of characteristic either 0 or  a prime $p>\lceil
  m/2\rceil$. Let $\la_1,\dots, \la_n\in R$ be distinct elements
  and  \[v=v_1v_2+v_3v_4+\dots+ v_{2\lfloor m/2\rfloor -1}v_{2\lfloor
    m/2\rfloor } \{+v_m\}\in E^m\]  (the last term appears only if
  $m$ is odd).

Let  \[A=\diag(\la_i+v)_{i=1}^n\in M_nE^m,\] so that
$A_0=\diag(\la_i)_{i=1}^n$. Then the characteristic polynomial of $A_0$
is \[f(x)=\prod_{i=1}^n(x-\la_i)\] and the minimal polynomial of $A$ over $R$
is \[f(x)^{\lceil m/2\rceil+1}.\]
\end{Pro}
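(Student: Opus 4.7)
The plan is to exploit the diagonality of $A$. Since $A=\diag(\la_i+v)_{i=1}^n$ is diagonal, for any $g\in R[x]$ we have $g(A)=\diag(g(\la_i+v))_{i=1}^n$, so $g(A)=0$ if and only if $g(\la_i+v)=0$ in $E^m$ for each $i$. That $f(A)^{\lceil m/2\rceil+1}=0$ is immediate from Theorem~\ref{CH}; the task is therefore to show that whenever $g(A)=0$, we have $f(x)^{\lceil m/2\rceil+1}\mid g(x)$.

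Since $v$ commutes with each scalar $\la_i$, Taylor's formula gives
\[g(\la_i+v)=\sum_{k\ge 0}\frac{g^{(k)}(\la_i)}{k!}\,v^k,\]
with division by $k!$ legal for $k\le\lceil m/2\rceil$ because $\mathrm{char}\,R$ is either $0$ or a prime exceeding $\lceil m/2\rceil$. The crucial step is to compute the powers $v^k$ explicitly and verify that $1,v,v^2,\dots,v^{\lceil m/2\rceil}$ are $R$-linearly independent in $E^m$. Writing $w_j=v_{2j-1}v_{2j}$ for $1\le j\le \lfloor m/2\rfloor$, each $w_j$ is of even degree, hence central in $E^m$, satisfies $w_j^2=0$, and the $w_j$ mutually commute. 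When $m$ is even, $v=w_1+\dots+w_{m/2}$ and the binomial theorem yields
\[v^k=k!\sum_{1\le j_1<\dots<j_k\le m/2}w_{j_1}\cdots w_{j_k}\in E^m_{2k},\]
which is nonzero for $0\le k\le m/2$ (by the characteristic hypothesis) and vanishes for $k>m/2$. When $m$ is odd, $v=u+v_m$ with $u=w_1+\dots+w_{(m-1)/2}$; since $v_m$ commutes with every $w_j$ (a short sign check) and $v_m^2=0$, one obtains $v^k=u^k+ku^{k-1}v_m$, whose two summands lie in $E^m_{2k}$ and $E^m_{2k-1}$ respectively. In either parity, the nonzero graded components of $v^k$ for $0\le k\le\lceil m/2\rceil$ occupy pairwise disjoint homogeneous pieces of $E^m$, yielding the desired $R$-linear independence.

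Given $g(A)=0$, the vanishing of the above Taylor expansion together with this linear independence forces $g^{(k)}(\la_i)=0$ for all $i=1,\dots,n$ and all $k=0,1,\dots,\lceil m/2\rceil$. Hence each $(x-\la_i)^{\lceil m/2\rceil+1}$ divides $g(x)$, and since the $\la_i$ are distinct, so does $f(x)^{\lceil m/2\rceil+1}$. The main obstacle I anticipate is the odd-$m$ computation of $v^k$: because $v$ is not itself central, the binomial expansion must be justified via commutativity of $u$ with $v_m$ rather than centrality of $v$; once that is in place, the graded bookkeeping is routine.
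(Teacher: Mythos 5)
Your proof is correct, but it takes a more computational route than the paper's. The paper likewise reduces to the diagonal entries $\la_i+v$ and relies on Theorem~\ref{CH} for $f(A)^{\lceil m/2\rceil+1}=0$; for minimality, however, it only checks that the $n$ maximal proper monic divisors $g_i(x)=f(x)^{\lceil m/2\rceil+1}/(x-\la_i)$ do not vanish at $A$. The single computation needed there is $v^{\lceil m/2\rceil}=\lceil m/2\rceil!\,v_1\cdots v_m\ne 0$: this top-degree element annihilates $E^m_{\ge 1}$, so the $(i,i)$ entry of $g_i(A)$ collapses immediately to $v^{\lceil m/2\rceil}f'(\la_i)^{\lceil m/2\rceil+1}\ne 0$, with no expansion of lower powers of $v$ required. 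You instead determine the entire ideal of polynomials vanishing at $A$, via a Taylor expansion and the $R$-linear independence of $1,v,\dots,v^{\lceil m/2\rceil}$; this costs you the graded bookkeeping for every power $v^k$ (which you carry out correctly in both parities), but it yields slightly more, namely the full annihilator of $\la_i+v$ in $R[x]$. One point to phrase more carefully in positive characteristic: the expansion should be written with Hasse derivatives, $g(\la+v)=\sum_k g^{[k]}(\la)v^k$ where $g^{[k]}(x)$ is the coefficient of $y^k$ in $g(x+y)$; the identification $g^{[k]}=g^{(k)}/k!$ is then invoked only for $k\le\lceil m/2\rceil<p$, while the terms with larger $k$ are discarded because $v^k=0$, not by dividing by $k!$. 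The same remark justifies your final step, since $(x-\la_i)^{\lceil m/2\rceil+1}\mid g$ is equivalent to the vanishing of the Hasse derivatives $g^{[k]}(\la_i)$ for $k\le\lceil m/2\rceil$, and these agree with $g^{(k)}(\la_i)/k!$ exactly in the range where $k!$ is invertible.
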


\begin{proof}  Observe that  \[ v^{\lceil m/2\rceil} =\lceil m/2\rceil
  !v_1\dots v_m\ne 0.\]  Thus, the polynomial

\[g_i(x)=\frac{f(x)^{\lceil m/2\rceil+1}}{x-\la_i}=(x-\la_i)^{\lceil
  m/2\rceil}\prod_{j\ne i}(x-\la_j)^{\lceil
  m/2\rceil+1}\] does not vanish at
$A$. Indeed, the $(i,i)$-entry of $g_i(A)$ is \[v^{\lceil
  m/2\rceil}\prod_{j\ne i} (\la_i-\la_j+v)^{\lceil
  m/2\rceil+1}=v^{\lceil
  m/2\rceil}f'(\la_i)^{\lceil
  m/2\rceil+1}\ne 0.\]
\end{proof}

\section{Capelli identity}
Recall \cite[Definition 1.5.3]{GZ} that the Capelli polynomial $d_k$ is defined by the formula
\[d_k(x_1,\dots, x_k; y_0,\dots, y_k)=\sum_{\pi\in\mathfrak S_k} (-1)^\pi
y_0x_{\pi(1)}y_1x_{\pi (2)}\cdots y_{k-1}x_{\pi (k)}y_k.\]
We say that the Capelli identity of $x$-degree $k$ holds in a ring $\mathfrak
A$ if the above expression is 0 for all $x_1,\dots, x_k, y_0,\dots,
y_k\in\mathfrak A$.
It is trivial that the Capelli identity of $x$-degree $k$
implies the Capelli identity  of $x$-degree $k+1$.

It is well known that the ring of $n$-square matrices over a commutative ring  satisfies
the Capelli identity of $x$-degree $n^2+1$ (because the Capelli polynomial is alternating in the variables $x_i$), but does not satisfy the Capelli
identity of $x$-degree $n^2$ if the base ring has $1\ne 0$ (because we may choose the $x_i$ to be the usual matrix units in some order, and choose the $y_i$ to be suitable matrix units such that exactly one term in $d_{n^2}$ is nonzero). We now wish to
generalize this to matrices over the Grassmann algebra $E^m$. We shall need
the following  lemma.

\begin{Lemma}\label{perm}
Let $a_1, \dots,  a_k
$ be elements of a ring. Suppose that $[k]=\{1,\dots, k\}=M\cup N$ is a disjoint union. Suppose that  $a_i$ and $a_j$ anticommute for distinct $i,j\in M$, but commute otherwise.

Let $\mathcal P$ be a partition of $[k]$ into $|N|$ classes, each class containing exactly one element of $N$. 
 Let $\mathfrak S\subseteq \mathfrak S_k$ be the Young subgroup corresponding to $\mathcal P$ (i.e., $\mathfrak S$ is the group of permutations leaving each class invariant).

\bigskip

(a) If $|M|$ is odd, then
\begin{equation}\label{odd}\sum_{\pi\in\mathfrak S}(-1)^\pi a_{\pi(1)}\cdots a_{\pi(k)}=0.\end{equation}

\bigskip

(b) If 
 $\mathcal P$ consists of intervals of odd cardinalities $m_1+1$, \dots, $m_{|N|}+1$ respectively, and $N$ consists of the leftmost elements of these intervals,  then
\begin{equation}\label{even}\sum_{\pi\in\mathfrak S}(-1)^\pi a_{\pi(1)}\cdots a_{\pi(k)}= m_1!\cdots m_{|N|}!a_1\cdots a_k.\end{equation} 
\end{Lemma}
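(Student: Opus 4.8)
The plan is to prove both parts simultaneously by induction on $|N|$, the number of classes in the partition $\mathcal P$, using the supercommutativity structure to reduce the sum over the Young subgroup $\mathfrak S$ to a product of sums over symmetric groups of each individual class. Since $\mathfrak S=\mathfrak S_{C_1}\times\cdots\times\mathfrak S_{C_{|N|}}$ is a direct product of the symmetric groups on the classes $C_1,\dots,C_{|N|}$, one is tempted to factor the sum as a product; the obstacle is that the elements $a_i$ do not in general commute across classes, so the monomial $a_{\pi(1)}\cdots a_{\pi(k)}$ does not literally split. The key observation that rescues this is that for $i\ne j$ the product $a_ia_j$ either equals $a_ja_i$ or $-a_ja_i$, and in the latter case both $i,j\in M$; so whenever we transpose two \emph{adjacent} letters belonging to different classes, the sign change is controlled and, crucially, is \emph{the same} for both orders of summation we want to compare. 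I would first treat the one-class case $|N|=1$ as the base step: here $N=\{n_1\}$, $M=[k]\setminus\{n_1\}$, and $\mathfrak S=\mathfrak S_k$, so the claim is that $\sum_{\pi\in\mathfrak S_k}(-1)^\pi a_{\pi(1)}\cdots a_{\pi(k)}$ equals $0$ if $|M|=k-1$ is odd (part (a)) and equals $m_1!\,a_1\cdots a_k$ with $m_1=k-1$ if $|M|=k-1$ is even and $N=\{1\}$ is the leftmost element (part (b)).

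For the base case I would argue as follows. Write $j$ for the unique index in $N$. Among the $k$ positions, fix the position of the letter $a_j$; for the remaining $k-1$ positions we are summing signed products of the $a_i$ with $i\in M$, which pairwise anticommute. A signed sum of a product of pairwise anticommuting elements over \emph{all} orderings: if we swap two of these anticommuting letters, the product picks up a factor $-1$ from the anticommutation and the permutation sign also flips, so the summand is \emph{unchanged} under transpositions within $M$ — hence $\sum_{\sigma\in\mathfrak S_M}(-1)^\sigma a_{\sigma}=|M|!\,a_{\text{(fixed order)}}$. But moving $a_j$ past the block of $M$-letters is sign-free after accounting for permutation sign in the same way only when the block is moved as a whole; more carefully, summing over where $a_j$ sits among $k$ slots contributes a factor of the form $\sum_{\ell=0}^{k-1}(-1)^{\ell}$ (the sign of the cyclic shift), which is $0$ when $k$ is even, i.e. when $|M|=k-1$ is odd — giving part (a) — and the full sum collapses to $k\cdot$(something) when we are in the part (b) situation with $a_j$ already leftmost, giving $|M|!\,a_1\cdots a_k$ after checking the single surviving term. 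I'd make this precise by the standard "collecting terms by the value of $\pi^{-1}(n_1)$" computation.

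For the inductive step with $|N|\ge 2$, I would peel off the class $C$ containing the \emph{largest} index in $N$ (for part (a), any class works; for part (b), I peel off the rightmost interval $C=\{k-m_{|N|},\dots,k\}$). Factor $\mathfrak S=\mathfrak S'\times\mathfrak S_C$ where $\mathfrak S'$ is the Young subgroup of the remaining classes on $[k]\setminus C$. For each $\pi\in\mathfrak S$, uniquely write $\pi=\pi'\pi''$ with $\pi'\in\mathfrak S'$, $\pi''\in\mathfrak S_C$, and $(-1)^\pi=(-1)^{\pi'}(-1)^{\pi''}$. The summand $a_{\pi(1)}\cdots a_{\pi(k)}$ has the $|C|$ letters of $C$ interleaved among the $k-|C|$ letters of the other classes; the plan is to move all $C$-letters to one end. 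Each time a $C$-letter passes a non-$C$-letter, the monomial changes by $\pm1$, and I claim that after summing over $\mathfrak S_C$ (which, by the base case applied to the class $C$ viewed in isolation, already produces either $0$ — killing everything in part (a) when $|C|-1$ is odd — or $m_{|N|}!$ times the sorted product $a_{C}$), these interleaving signs organize into exactly the permutation sign $(-1)^{\pi'}$ needed so that the remaining sum over $\mathfrak S'$ is of the same shape with one fewer class. The main obstacle is precisely this sign bookkeeping: showing that the signs incurred by shuffling the $C$-block out past the other letters, combined with $(-1)^{\pi''}$ over the inner sum, reproduce the clean statement for $\mathfrak S'$. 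I would handle this by noting that since $|C|$ is odd in the surviving case, moving the whole $C$-block (an odd number of letters) past a single foreign letter $a_i$ contributes $(-1)^{|C|}=-1$ times the total anticommutation sign among $\{a_i\}\cup C\cap M$; careful accounting of the parities — using that all of $C$ except possibly its $N$-element lies in $M$ — shows these combine to $(-1)^{(\text{number of }C\text{-}M\text{ letters})}$ per crossing, which telescopes correctly. Once the sign identity $(-1)^{\pi}a_{\pi(1)}\cdots a_{\pi(k)}=(-1)^{\pi'}a_{\pi'(1)}\cdots a_{\pi'(k-|C|)}\cdot(\pm a_C\text{-terms})$ is established in the appropriate averaged sense, both parts follow from the induction hypothesis applied to $\mathfrak S'$.
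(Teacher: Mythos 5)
Your base case $|N|=1$ is correct (it is essentially the computation the paper relies on for part (b)), and your part (b) step is unproblematic because there the classes are intervals, so peeling off the rightmost interval involves no interleaving. The genuine gap is in the inductive step for part (a), where the classes need not be intervals. You assert that the inner sum over $\mathfrak S_C$, ``by the base case applied to the class $C$ viewed in isolation, already produces either $0$ --- killing everything in part (a) when $|C|-1$ is odd --- or $m_{|N|}!$ times the sorted product.'' This is false once letters of other classes are interleaved with those of $C$. Concretely, take $k=5$, classes $\{1,3,5\}$ and $C=\{2,4\}$, $N=\{1,2\}$, $M=\{3,4,5\}$ (so $|M|=3$ is odd, part (a) applies, and $C$ contains the largest element of $N$). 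Here $|C|-1=1$ is odd, yet for the identity permutation on the class $\{1,3,5\}$ the inner sum over $\mathfrak S_C=\{e,(24)\}$ is
\[a_1a_2a_3a_4a_5-a_1a_4a_3a_2a_5=a_1a_2a_3a_4a_5+a_1a_2a_3a_4a_5=2\,a_1a_2a_3a_4a_5\ne 0,\]
because $a_2$ (with $2\in N$) commutes with everything while $a_4$ anticommutes with $a_3$. The underlying problem is that the sign incurred when a letter of $C$ crosses a foreign letter depends on whether that foreign letter lies in $M$, hence on the permutations of the \emph{other} classes; so the inner sum is not the ``isolated'' sum, it need not vanish when $|C\cap M|$ is odd, and when it survives the residual sign is not $(-1)^{\pi'}$ but is twisted by a $\pi'$-dependent factor. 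Consequently the outer sum is not of the shape required by your induction hypothesis, and the telescoping you invoke is precisely the unproved (and, as stated, false) step.

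The paper avoids this by inducting on $|M|$ rather than on $|N|$. It first proves (a) when $\mathcal P$ consists of intervals, where the monomial literally factors over the classes with no cross-class reordering; since $|M|=\sum(|I|-1)$ is odd, some interval $I$ has even cardinality, and its factor vanishes by exactly your base-case pairing. It then shows that the conclusion is preserved under transposing two adjacent positions lying in different classes: the terms whose sign changes are those where both transposed positions carry $M$-letters, and these sum to zero by the induction hypothesis applied with $|M|-2$. If you wish to keep an induction on the number of classes, you would need to formulate and prove a strengthened statement tracking the $\pi'$-dependent sign produced by the inner sum; as written, your inductive step does not go through.
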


\begin{proof} (a) We use induction on $m=|M|$. For $m=1$, the group $\mathfrak S$ has two elements of distinct sign, and all $a_i$ commute, so \eqref{odd} holds.

Let $m\ge 3$ be odd. Suppose that the claim is true for $m-2$. Let us prove it for
$m$.

Consider the special case when $\mathcal P$ is a partition into intervals.
Then the left hand side of \eqref{odd} can be written as a product. For each interval $ I\in\mathcal P$, we get a factor of the form
\begin{equation}\label{interval}\sum_{\pi\in\mathfrak S_I}(-1)^\pi \prod_{i\in I} a_{\pi(i)}.\end{equation}
Since $m$ is odd, we can choose an  interval $I\in\mathcal P$ that has an even number of elements. There is  a unique $i\in I\cap N$.
The terms in \eqref{interval} where $\pi^{-1}(i)$ is even can be paired off with those where
$\pi^{-1}(i)$ is odd. This can be done so that in each pair $\pi$ has different signs but $\prod_{i\in I} a_{\pi(i)}$ is the same, so the sum within each pair is zero. This proves the special case.

To finish the proof, it suffices to prove the following. If the lemma is true for a  sequence $a_1$, \dots, $a_k$ and a partition $\mathcal P$, and $i-1$ and $i$ are in distinct classes $S$ and $T$ of $\mathcal P$ respectively, then the lemma remains true for  $a_i'=a_{i-1}$, $a_{i-1}'=a_i$, $M'=M\Delta \{i-1,i\}$,  $N'=N\Delta \{i-1,i\}$,  $S'=(S-\{i-1\})\cup \{i\}$, $T'=(T-\{i\})\cup \{i-1\}$ (all other data remain unchanged).

To prove this, we examine the change made in the left hand side of  \eqref{odd}. The terms  remain the same up to order and sign.  The terms that  change sign are exactly those where  ${\pi (i-1)}$ and ${\pi (i)}$  both come from the set $M$.  It suffices to prove that these terms sum to zero. This is true even if   ${\pi (i-1)}$ and ${\pi (i)}$ are fixed elements of $M$, due to the induction hypothesis.
 \bigskip

(b) We may assume that $|N|=1$. Then the claim is trivial.
\end{proof}

\begin{Th}\label{Capelli} The ring $M_nE^m$ satisfies the Capelli identity of
  $x$-degree $k=n^2+2\lfloor m/2\rfloor+1$.
\end{Th}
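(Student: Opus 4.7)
My plan is to combine multilinearity with Lemma~\ref{perm}. First, by multilinearity of $d_k$ in each of its arguments together with the $R$-basis $\{v_I:I\subseteq[m]\}$ of $E^m$, it suffices to treat the special case $x_i=v_{I_i}X_i$ and $y_j=v_{J_j}Y_j$ with $X_i,Y_j\in M_n(R)$ and $I_i,J_j\subseteq[m]$. Only pairwise-disjoint families $I_1,\dots,I_k,J_0,\dots,J_k$ give nonzero Grassmann products, so I assume disjointness. Because $R$ is central in $E^m$, each summand factors as
\[y_0 x_{\pi(1)} y_1 \cdots y_k = \varepsilon(\pi)\,v_U\cdot W(\pi),\qquad W(\pi)=Y_0X_{\pi(1)}Y_1\cdots X_{\pi(k)}Y_k,\]
where $U=\bigcup J_j\cup\bigcup I_i$ is fixed and $\varepsilon(\pi)\in\{\pm1\}$ is the supercommutation sign for rearranging $v_{J_0}v_{I_{\pi(1)}}v_{J_1}\cdots v_{J_k}$ into canonical order.

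Expanding $W(\pi)$ entry by entry along paths $P$ through the intermediate column/row indices, and using commutativity of $R$, the required identity reduces, for each $(r,s)$ and each $P$, to
\[\sum_{\pi\in\mathfrak S_k}(-1)^\pi\,\varepsilon(\pi)\prod_{i=1}^k(X_{\pi(i)})_{q_i^P}=0\quad\text{in }R,\]
where $q_i^P\in[n]^2$ is the position of the $i$-th $x$-entry along $P$. The product $\prod_i(X_{\pi(i)})_{q_i^P}$ is invariant under the Young subgroup $\mathfrak S^P\le\mathfrak S_k$ preserving the position partition $\mathcal P^P$ of $[k]$ into classes $C_{(p,q)}=\{i:q_i^P=(p,q)\}$ (since elements of $R$ commute); grouping the sum by cosets of $\mathfrak S^P$ and factoring out the constant product, it remains to show $\sum_{\sigma\in\mathfrak S^P}(-1)^\sigma\varepsilon(\pi_0\sigma)=0$ for each coset representative $\pi_0$.

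This inner sum fits the setup of Lemma~\ref{perm} with $a_i=v_{I_{\pi_0(i)}}$: anticommutation of the pair $a_i,a_j$ corresponds to $i,j\in M=\{i:|I_{\pi_0(i)}|\text{ odd}\}$, commutativity otherwise. Moving the fixed $v_{J_j}$-factors to one end via supercommutativity rewrites the inner sum as the classical signed sum $\sum_\sigma(-1)^\sigma a_{\sigma(1)}\cdots a_{\sigma(k)}$ up to a $\sigma$-dependent sign, which when some $|J_j|$ is odd is a cumulative tail-sum of $|I_{\pi_0\sigma(i)}|$-parities that I would absorb into a redefinition of each $a_i$ by attaching $\pm1$ preserving the commutation pattern. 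The bound $k\ge n^2+2\lfloor m/2\rfloor+1$ combined with $|M|\le\sum|I_i|\le m$ (forced by disjointness) yields $|N|=k-|M|\ge k-m\ge n^2$, enough $N$-indices to distribute across the at most $n^2$ position-classes so that $\mathcal P^P$ refines into $|N|$ sub-classes each containing exactly one $N$-element, satisfying the hypotheses of Lemma~\ref{perm}(a). The lemma then forces the inner sum to vanish whenever $|M|$ is odd.

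The main obstacle is the case $|M|$ even, where Lemma~\ref{perm}(a) fails and Lemma~\ref{perm}(b) yields a nonzero value. Here I would pair off two $N$-indices lying in a common position-class by invoking the classical Capelli identity for $M_n(R)$ (which consumes $n^2+1$ slots' worth of $N$-indices, still available in our count) and reduce to the odd-$|M|$ case by induction on $|M|$. The sharp constant $2\lfloor m/2\rfloor$ reflects exactly this parity bookkeeping: each disjoint odd $|I_i|$ or $|J_j|$ uses at least one generator of $[m]$, and the worst case rounds $m$ down to the nearest even number, leaving the $+1$ to be covered by the classical Capelli step.
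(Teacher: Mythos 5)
Your framework---multilinearity, reduction to basis-element entries, factoring each term into a Grassmann sign times a product of matrix units, Young subgroups of the position partition, and Lemma~\ref{perm}(a)---is the same as the paper's, but three steps in the middle do not go through as written, and all three are repaired by a single counting reduction that you omit. First, knowing only that $|N|\ge n^2$ does not let you refine the position partition so that each class contains exactly one $N$-element: a position class could consist entirely of $M$-indices (all the $x$'s sitting at that matrix position carrying odd-degree Grassmann coefficients), and then no refinement of the required shape exists; indeed the signed sum over that class's symmetric group equals $|C|!\,a_1\cdots a_{|C|}$, which need not vanish, so Lemma~\ref{perm}(a) cannot be what kills such a configuration. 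Second, the sign produced by moving the $v_{J_j}$ past the $v_{I_{\pi(i)}}$ is $(-1)^{\sum_i c_i|I_{\pi(i)}|}$ with $c_i=\sum_{j<i}|J_j|$; it couples the position $i$ to the value $\pi(i)$ and is not absorbed by replacing each $a_l$ with $\pm a_l$, which is a position-independent modification. Third, ``reduce to the odd-$|M|$ case by induction on $|M|$'' does not parse: $|M|$ is determined by the input data, and no induction changes its parity.

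The missing idea is the paper's pigeonhole step. Since the $2k+1$ coefficient degrees sum to at most $m$, at least $k-m$ of the $x$'s carry degree-zero coefficients, i.e.\ are standard matrix units of $M_nR$. If at least $n^2+1$ of them do---which happens automatically when $m$ is even, and also whenever $m$ is odd but some $y$-coefficient has positive degree or some $x$-coefficient has degree at least $2$---then two $x$-arguments coincide and $d_k$ vanishes because it is alternating in the $x$'s; this, not the classical Capelli identity plus induction, is the correct disposal of your problematic even-$|M|$ case. In the only remaining case, $m$ is odd, exactly $n^2$ of the $x$'s are the $n^2$ distinct matrix units, the other $m$ of them are single generators $v_i$ times matrix units, and all $y$-coefficients have degree zero. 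Then every position class contains exactly one $N$-index, the $v_{J_j}$ sign issue disappears, $|M|=m$ is odd, and Lemma~\ref{perm}(a) applies exactly as you intend. Without this reduction your argument has real holes; with it, it becomes the paper's proof.
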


\begin{proof}
Let $A_1,\dots, A_k, B_0,\dots, B_k\in M_nE^m$. We prove
that \begin{equation}\label{capelli} d_k(A_1,\dots, A_k; B_0,\dots,
  B_k)=0.\end{equation} By multilinearity, we may assume that each $A_i$ and
 each $B_i$
has only one nonzero entry, which is an element of the standard $R$-basis \eqref{basis} of $E^m$.
Moreover, we may assume that the degrees of these $2k+1$ basis elements sum to at most $m$.
 Then  at most $m$ of these degrees are nonzero. I.e., at least $2k+1-m$
of these $2k+1$  basis elements are 1. At least $k-m$ of these 1's come from the matrices $A_i$.


If $m$ is even, then $k-m=n^2+1$,
so, by the pigeonhole principle, there exist indices $i\ne i'$ such that
$A_i=A_{i'}$, whence \eqref{capelli} holds.

If $m$ is odd, then $k-m=n^2$. We may assume that $$A_{1}, \dots, A_{n^2}$$ is
the standard $R$-basis of $M_nR$, while $$A_{n^2+i}=v_iA_{j_i},$$ where $1\le j_i\le n^2$ for $i=1,\dots,
m$. We may also assume that each $B_i$ comes from the  standard $R$-basis of $M_nR$. 
 The claim now follows from Lemma~\ref{perm}(a), applied to the nonzero
 entries of the matrices $A_i$.
 \end{proof}

\begin{Pro}
The ring $M_nE^m$ does not satisfy the Capelli identity of
  $x$-degree $k=n^2+2\lfloor m/2\rfloor$ if the base ring $R$ is a field of characteristic either zero or  a prime $p>2\left\lceil\lfloor m/2\rfloor /n^2\right\rceil$.
\end{Pro}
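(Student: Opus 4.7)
The plan is to exhibit specific matrices $A_1,\dots,A_k$ and $B_0,\dots,B_k$ in $M_nE^m$ for which $d_k(A;B)$ is a nonzero element, and to evaluate the resulting sum via Lemma~\ref{perm}(b) (whereas the proof of Theorem~\ref{Capelli} relied on Lemma~\ref{perm}(a) to obtain vanishing). The numerical hypothesis on the characteristic enters exactly because Lemma~\ref{perm}(b) outputs a factorial coefficient.

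Write $\ell=\lfloor m/2\rfloor$ and $t=\lceil \ell/n^2\rceil$, so $k=n^2+2\ell$. First I would choose nonnegative integers $p_1,\dots,p_{n^2}$ with $\sum_\alpha p_\alpha=\ell$ and $p_\alpha\le t$ for every $\alpha$ (possible by the definition of $t$), then split $\{1,\dots,k\}$ into consecutive intervals $I_1,\dots,I_{n^2}$ with $|I_\alpha|=2p_\alpha+1$. Let $e_1,\dots,e_{n^2}$ enumerate the standard matrix units of $M_nR$. In block $I_\alpha$ I put $A_i=e_\alpha$ at the leftmost position and $A_i=v_je_\alpha$ at the remaining $2p_\alpha$ positions, using a disjoint batch of $2p_\alpha$ Grassmann generators per block so that, writing $a_j$ for the Grassmann scalar factor of $A_j$, the ordered product $a_1a_2\cdots a_k$ equals $v_1v_2\cdots v_{2\ell}$.

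For the $B_j$ I would use matrix units that chain the identity-permutation product: if $e_\alpha=e_{r,s}$ and $j$ is strictly interior to $I_\alpha$, take $B_j=e_{s,r}$ so that $e_\alpha B_je_\alpha=e_\alpha$; between blocks $I_\alpha$ and $I_{\alpha+1}$ take $B_j$ to be the matrix unit linking the outgoing column of $e_\alpha$ to the incoming row of $e_{\alpha+1}$; and choose $B_0,B_k$ to close the chain from the left and right. A direct check then shows that $B_0A_{\pi(1)}B_1\cdots A_{\pi(k)}B_k$ is nonzero only when $A_{\pi(j)}$ has the same row and column indices as $A_j$ for every $j$, which forces $\pi$ to preserve each $I_\alpha$ setwise, i.e.\ $\pi\in\mathfrak S_{\mathcal P}:=\prod_\alpha\mathfrak S_{I_\alpha}$. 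For any such $\pi$ the matrix-unit skeleton collapses to the same fixed nonzero matrix $M$ independent of $\pi$, and because each $B_j$ has no Grassmann part and commutes with each $a_j$, one obtains
\[
d_k(A_1,\dots,A_k;B_0,\dots,B_k)=\Bigl(\sum_{\pi\in\mathfrak S_{\mathcal P}}(-1)^\pi a_{\pi(1)}\cdots a_{\pi(k)}\Bigr)M.
\]

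At this point Lemma~\ref{perm}(b) applies to the sequence $a_1,\dots,a_k$ with the partition $\mathcal P=\{I_\alpha\}$: the positions with $a_j\neq 1$ carry pairwise anticommuting degree-one Grassmann generators, the positions with $a_j=1$ commute with everything, each $I_\alpha$ is an interval of odd cardinality $2p_\alpha+1$, and its leftmost element is the unique $j\in I_\alpha$ with $a_j=1$. The lemma then yields
\[
\sum_{\pi\in\mathfrak S_{\mathcal P}}(-1)^\pi a_{\pi(1)}\cdots a_{\pi(k)}=\prod_{\alpha=1}^{n^2}(2p_\alpha)!\cdot v_1v_2\cdots v_{2\ell}.
\]
The element $v_1\cdots v_{2\ell}\in E^m$ is nonzero, and the hypothesis $p>2\lceil \ell/n^2\rceil$ together with $p_\alpha\le t$ makes every $(2p_\alpha)!$ a unit in $R$, so $d_k(A;B)\neq 0$. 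The main obstacle I expect is the bookkeeping---verifying precisely that the nonzero permutations coincide with the Young subgroup $\mathfrak S_{\mathcal P}$, and that the placement of the plain matrix units at the leftmost positions of the intervals matches the hypothesis of Lemma~\ref{perm}(b)---but no further idea is needed beyond that.
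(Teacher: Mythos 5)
Your proposal is correct and follows essentially the same route as the paper: both distribute the $2\lfloor m/2\rfloor$ Grassmann generators into blocks attached to the $n^2$ matrix units, arrange the $B_j$ so that only the Young-subgroup permutations survive, and invoke Lemma~\ref{perm}(b) to extract the factorial coefficient, with the characteristic hypothesis guaranteeing that each $(2p_\alpha)!$ is nonzero. Your write-up is somewhat more explicit than the paper's (in particular about why the nonvanishing permutations form exactly the Young subgroup), but no new idea is involved.
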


\begin{proof} Let us write $2\lfloor m/2\rfloor$ as a  sum of $n^2$ even numbers that are smaller than $p$ if the characteristic is $p>0$. Let these even numbers be $m_1$, \dots, $m_{n^2}$. Let $A_1$, \dots, $A_{n^2}$ be the standard basis of $M_nR$. For each $r$, consider $m_r$ matrices of the form $v_iA_r$, chosen so that each index $i=1, \dots, 2\lfloor m/2\rfloor$ is used exactly once. Let us insert the chosen $m_r$ multiples of $A_r$ immediately after $A_r$ into the sequence $A_1$, \dots, $A_{n^2}$.  This gives us  a sequence $C_1$, \dots, $C_k$. Now let $B_0$, \dots, $B_k$ be elements from the standard basis of $M_nR$ with the property that $B_0C_1B_1\cdots C_kB_k\ne 0$. Then \begin{equation}\label{fakt}d_k(C_1,\dots, C_k; B_0, \dots, B_k)=B_0C_1B_1\cdots C_kB_k\prod_{r=1}^{n^2}m_r!
\end{equation} by Lemma~\ref{perm}(b), applied to the case where $a_i$ is the unique
  nonzero entry of the matrix $C_i$  ($i=1,\dots, k$). The right hand side of \eqref{fakt} is nonzero because $m_r<p$ if the characteristic is $p>0$.
\end{proof}

\section{Standard identity}
The standard polynomial $s_k$ is defined by the formula
\[s_k(x_1,\dots, x_k)=\sum_{\pi\in\mathfrak S_k} (-1)^\pi
x_{\pi(1)}x_{\pi (2)}\cdots x_{\pi (k)}.\]
We say that the standard identity of degree $k$ holds in a ring $\mathfrak
A$ if the above expression is 0 for all $x_1,\dots, x_k\in\mathfrak A$.
It is trivial that the standard identity of degree $k$
implies the standard identity  of degree $k+1$. When $\mathfrak A\ni 1$ and $k$ is even, the converse implication holds as well because $$s_k(x_1, ..., x_k) =s_{k+1}(1, x_1, ..., x_k).$$ 

Also, the Capelli identity of $x$-degree $k$ implies the standard identity of degree $2\lfloor k/2\rfloor$ if $\mathfrak A\ni 1$. Indeed, we may substitute 1 for each $y_i$ in the Capelli identity to get the standard identity of degree $k$, and then we can use the previous remark.

The celebrated Amitsur--Levitzki theorem \cite{AL1}, see e.g.\ also \cite{GZ}, states  that the ring of $n$-square matrices over a commutative ring  satisfies
the standard identity of degree $2n$. An easy example shows that it  does not satisfy the standard
identity of degree $2n-1$ if the base ring has $1\ne 0$. We now wish to
generalize this to matrices over the Grassmann algebra $E^m$.

L.\ M\'arki, J.\ Meyer, J.\ Szigeti, and L.\ van Wyk  \cite[3.7 Theorem]{MMSzW}
used an embedding into a  matrix algebra over a commutative ring and invoked
the Amitsur--Levitzki Theorem to show that for $m\ge 1$, the standard identity
of degree $2^mn$ holds  in $M_nE^m$.  They also invoked a  very general theorem of
M.\ Domokos \cite [Theorem 5.5]{D}
to  show that the standard identity of degree $(m+1)n^2+1$ holds in $M_nE^m$
\cite[3.8 Remark]{MMSzW}. We now show that these degree bounds can be
substantially reduced. For the latter one, this is already clear from Theorem~\ref{Capelli}, which yields 
\begin{Cor}\label{6} The standard identity of degree $$2\left(\left\lfloor\frac{n^2+1}2\right\rfloor+\left\lfloor \frac m2\right\rfloor\right)$$ holds in $M_nE^m$.
\end{Cor}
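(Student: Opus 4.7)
The plan is to derive the corollary directly from Theorem~\ref{Capelli} using the general observation, recalled two paragraphs above its statement, that in a unital ring the Capelli identity of $x$-degree $k$ implies the standard identity of degree $2\lfloor k/2\rfloor$. Since $M_nE^m$ is unital, this reduction applies.

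Concretely, I would proceed as follows. First, invoke Theorem~\ref{Capelli} to conclude that $M_nE^m$ satisfies the Capelli identity of $x$-degree
\[k = n^2 + 2\lfloor m/2\rfloor + 1.\]
Second, substitute $y_i=1$ for every $i=0,\dots,k$ in the Capelli polynomial $d_k$; what remains is (up to sign and reindexing) exactly $s_k$, so $M_nE^m$ satisfies the standard identity of degree $k$. Third, since $k$ is odd, use the identity
\[s_{k-1}(x_1,\dots,x_{k-1}) = s_k(1,x_1,\dots,x_{k-1})\]
noted in the paper to drop from degree $k$ to the even degree $k-1 = 2\lfloor k/2\rfloor$.

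Finally, it remains only to rewrite $2\lfloor k/2\rfloor$ in the form claimed. Since $2\lfloor m/2\rfloor$ is even,
\[\left\lfloor\frac{k}{2}\right\rfloor = \left\lfloor\frac{n^2+1}{2}\right\rfloor + \left\lfloor\frac{m}{2}\right\rfloor,\]
and multiplying by $2$ yields the asserted degree. There is no real obstacle here: the entire content is in Theorem~\ref{Capelli} together with the already-stated general passages between the Capelli and standard identities, and the corollary is simply the sharpest standard-identity consequence of that Capelli result.
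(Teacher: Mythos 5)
Your proof is correct and is exactly the argument the paper intends: the corollary is stated as an immediate consequence of Theorem~\ref{Capelli} via the remark that the Capelli identity of $x$-degree $k$ yields the standard identity of degree $2\lfloor k/2\rfloor$ in a unital ring. Your explicit verification of the substitution $y_i=1$, the parity step, and the arithmetic $\lfloor k/2\rfloor=\lfloor (n^2+1)/2\rfloor+\lfloor m/2\rfloor$ just spells out what the paper leaves implicit.
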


An improvement of the  degree bound 
$2^mn$ is given by 

\begin{Pro}\label{st}
The standard identity of degree $k=2n(\lfloor m/2\rfloor+1)$ holds in $M_nE^m$.
\end{Pro}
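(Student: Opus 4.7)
My plan is to reduce the problem to an identity in $M_n R$ via the supercommutativity structure of $E^m$, and then either to combine alternation with Amitsur--Levitzki, or to bootstrap from the Cayley--Hamilton theorem of Section~2.

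By $R$-multilinearity in each $X_i$, I may assume every $X_i$ has the form $X_i = A_i v_{I_i}$, where $A_i \in M_n R$ is a matrix unit and $v_{I_i}$ is a standard basis element of $E^m$ indexed by some $I_i \subseteq \{1,\dots, m\}$. Since entries of $A_i$ lie in $R$ and commute with everything in $E^m$, the Grassmann factors may be freely pulled across the matrix factors. If $I_i \cap I_j \ne \emptyset$ for some $i \ne j$, then $v_{I_i} v_{I_j} = 0$ and every term of $s_k(X_1, \dots, X_k)$ vanishes, so I may assume the $I_i$'s are pairwise disjoint. This forces $q := \#\{i : |I_i| \text{ is odd}\} \leq \sum_i |I_i| \leq m$.

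Using $v_{I_i} v_{I_j} = (-1)^{|I_i| |I_j|} v_{I_j} v_{I_i}$, one obtains
\[
s_k(X_1, \dots, X_k) = v_{I_1 \cup \cdots \cup I_k} \cdot T, \qquad T = \sum_{\pi \in S_k} (-1)^\pi \operatorname{sgn}(\pi|_O)\, A_{\pi(1)} \cdots A_{\pi(k)},
\]
where $O = \{i : |I_i| \text{ is odd}\}$ and $\operatorname{sgn}(\pi|_O)$ is the sign of the permutation $\pi$ induces on the $O$-values. This reduces the claim to proving that the \emph{super-standard polynomial} $T$ vanishes on $M_n R$ whenever $q \leq m$ and $k = 2n(\lfloor m/2\rfloor + 1)$. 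A direct computation shows that $T$ is alternating in the $(k-q)$ matrices indexed by $\bar O$ and symmetric in the $q$ matrices indexed by $O$. Factoring each $\pi$ into its shuffle (the subset $\pi^{-1}(O)\subseteq[k]$), the bijection $\tau$ giving the $O$-ordering, and the bijection $\rho$ for the $\bar O$-ordering, and using $(-1)^\pi \operatorname{sgn}(\pi|_O) = \operatorname{sgn}(\rho)\cdot \operatorname{sgn}(\text{shuffle})$, summing over $\rho$ for fixed shuffle and $\tau$ collapses the inner sum to a Capelli polynomial $d_p$ (with $p = k-q$) on $M_n R$, whose spacer entries are products of $O$-indexed $A_i$'s.

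The hardest part will be to conclude $T = 0$ from this presentation. Alternation in $\bar O$ alone forces vanishing only when $p > n^2$, which fails for large $n$ and small $m$; one must bring in the Amitsur--Levitzki bound $2n$. My plan is either (a) to expand the double sum and show that cancellations across the outer sum over shuffles and $\tau$'s collapse it to a linear combination of $s_{2n}$-instances that vanish by Amitsur--Levitzki, or (b) to bootstrap from Theorem~\ref{CH}: every element of $M_n E^m$ satisfies a monic polynomial of degree $n(\lceil m/2\rceil + 1)$ over $R$, so for $m$ even a Razmyslov/Procesi-type theorem yields the standard identity of degree $2n(\lceil m/2 \rceil + 1) = k$ directly. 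For $m$ odd the identity of degree $k$ already holds in the subring $M_n E^{m-1}$ (since $2n(\lfloor(m-1)/2\rfloor + 1) = n(m+1) = k$), and I would lift it to $M_n E^m$ via the decomposition $X_i = A_i + B_i v_m$ with $A_i, B_i \in M_n E^{m-1}$, using $v_m^2 = 0$ to kill all terms with two or more $v_m$-factors, the parity-swap automorphism $\iota$ of $M_n E^{m-1}$ (arising from $v_m x = \iota(x) v_m$) to collect the remaining terms, and an auxiliary combinatorial identity to show that the degree-1-in-$v_m$ part of $s_k$ also vanishes under the inductive hypothesis.
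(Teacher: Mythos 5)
There is a genuine gap: the proposal performs only the preparatory reductions and explicitly defers the actual proof. The passage to the ``super-standard polynomial'' $T$ (multilinearity, disjoint supports, pulling the Grassmann factors out, the symmetric/alternating structure with $q\le m$ symmetric slots) is fine, but it ends exactly where the real work begins --- you say so yourself (``the hardest part will be to conclude $T=0$''). Plan (a) is only a hope that cancellations will produce Amitsur--Levitzki instances. Plan (b) rests on a misstated theorem: the fact that every element satisfies \emph{some} monic polynomial of degree $d$ over $R$ does not by itself yield the standard identity of degree $2d$; the Razmyslov--Procesi machinery requires an algebra \emph{with trace} satisfying the full (polarized) degree-$d$ Cayley--Hamilton \emph{trace} identity, plus characteristic $0$. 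In the present situation this could in principle be arranged (take $\mathrm{Tr}(A)=(\lceil m/2\rceil+1)\tr(A_0)$, check that $f(x)^{\lceil m/2\rceil+1}$ is the characteristic polynomial reconstructed from these traces, reduce to $\C$ as in the proof of Theorem~\ref{CH}), but none of this is done, and it is far heavier than needed. Worse, for $m$ odd the lifting from $M_nE^{m-1}$ to $M_nE^m$ in the same degree hinges on ``an auxiliary combinatorial identity'' for the degree-$1$-in-$v_m$ part, which is precisely the nontrivial point and is not proved; it is not a formal consequence of $s_k$ holding in $M_nE^{m-1}$, because the $\iota$-twisted sum you obtain is not an evaluation of $s_k$.

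For comparison, the paper's proof avoids all of this with one observation. It proves the stronger statement that any product of $\lfloor m/2\rfloor+1$ values of $s_{2n}$ on $M_nE^m$ vanishes (which implies $s_k=0$ since $s_k$ is a signed sum of such products). The key point is that each value $s_{2n}(A_1,\dots,A_{2n})$ lies in $M_nE^m_{\ge2}$: for each $i$, the quotient $E^m/(v_j\mid j\ne i)\simeq R[v_i]/(v_i^2)$ is \emph{commutative}, so Amitsur--Levitzki forces every entry of $s_{2n}(A_1,\dots,A_{2n})$ into the ideal $(v_j\mid j\ne i)$, and $\bigcap_{i=1}^m(v_j\mid j\ne i)=E^m_{\ge2}$. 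A product of $\lfloor m/2\rfloor+1$ matrices over $E^m_{\ge2}$ has entries of degree at least $2(\lfloor m/2\rfloor+1)>m$, hence is zero. If you want to salvage your approach, this is the mechanism your plan (a) is missing: Amitsur--Levitzki should be applied entrywise over commutative quotients of $E^m$, not sought inside a combinatorial expansion of $T$.
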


\begin{proof}
We prove the stronger identity \[s_{2n}(A_1,\dots,
A_{2n})s_{2n}(A_{2n+1},\dots, A_{4n})\cdots s_{2n}(A_{k-2n+1},\dots, A_k)=0\]
for all $A_1,\dots, A_k\in M_nE^m$. It suffices to prove that each of the
$\lfloor m/2\rfloor +1$ factors is contained in  $M_nE^m_{\ge 2}$. In fact, it
suffices to prove this for the first factor. Observe that the
ring $$E^m/(v_2,\dots, v_m)\simeq R[v_1]/(v_1^2)$$ is commutative.  Thus, by the
Amitsur--Levitzki Theorem, $n$-square matrices over this ring satisfy the
standard identity of degree $2n$. Thus, each entry in the matrix
$s_{2n}(A_1,\dots, A_{2n})$ is contained in the ideal $(v_2,\dots, v_m)$;
moreover, by the same argument, it is contained
in  \[\bigcap_{i=1}^m(v_j|j\ne i)=E^m_{\ge 2},\] as claimed.
\end{proof}

Note that for $m=0$ or $m=1$, the ring $M_nE^m$ is commutative and
Proposition~\ref{st} reduces to the Amitsur--Levitzki Theorem ($k=2n$) and
therefore is sharp. 

Proposition~\ref{st} is sharp for $n=1$, and Corollary~\ref{6} is sharp for $n=1$ or $n= 2$. More generally, we have

\begin{Pro}  The standard identity of degree $k=2(n+\lfloor m/2\rfloor)-1$ does not hold in $M_nE^m$ if the base ring $R$ is  a field of characteristic either zero or a prime  $p> 2\lfloor m/2\rfloor$.
\end{Pro}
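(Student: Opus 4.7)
The plan is to exhibit explicit matrices $A_1, \ldots, A_k \in M_n E^m$ with $s_k(A_1, \ldots, A_k) \ne 0$, by enriching the classical sharpness example for Amitsur--Levitzki with $2\lfloor m/2\rfloor$ Grassmann generators attached to a single matrix unit. Set $q = \lfloor m/2\rfloor$, take the matrix units $F_1 = e_{11}$, $F_2 = e_{12}$, $F_3 = e_{22}$, $F_4 = e_{23}$, \dots, $F_{2n-1} = e_{nn}$ (the standard Eulerian-path witness that $s_{2n-1}$ fails in $M_n R$), and define
\[
A_1 = e_{11},\qquad A_i = v_{i-1} e_{11}\ \ (2\le i\le 2q+1),\qquad A_{2q+1+j} = F_{j+1}\ \ (1\le j\le 2n-2),
\]
so that $k = 2q + (2n-1) = 2(n+q)-1$, as required.

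I would next identify which permutations $\pi \in \mathfrak S_k$ contribute nonzero summands to $s_k(A_1, \ldots, A_k)$. Each $A_i$ is a Grassmann scalar times a matrix unit, so $A_{\pi(1)} \cdots A_{\pi(k)}$ vanishes unless the matrix units compose nontrivially, i.e.\ unless they trace an Eulerian path in the directed multigraph on $\{1,\ldots,n\}$ whose edges are these $k$ matrix units. In this graph vertex $1$ carries $2q+1$ loops and the unique outgoing edge $F_2$, whereas each vertex $i$ with $2\le i\le n$ carries exactly one loop and, for $i<n$, one outgoing edge $F_{2i}$. Because once the path leaves $i$ through $F_{2i}$ it cannot return, every loop at $i$ must precede $F_{2i}$. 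This forces $F_2, F_3, \ldots, F_{2n-1}$ into positions $2q+2,\ldots,k$ in precisely that order, while $A_1, \ldots, A_{2q+1}$ occupy positions $1, \ldots, 2q+1$ in an arbitrary order $\rho \in \mathfrak S_{2q+1}$. The matrix part of every contributing term equals $e_{11}^{2q+1} F_2 F_3 \cdots F_{2n-1} = e_{1n}$.

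Pulling this matrix factor out (its entries lie in $R$ and commute with every Grassmann coefficient), the computation reduces to
\[
s_k(A_1, \ldots, A_k) = \Bigl(\sum_{\rho \in \mathfrak S_{2q+1}} (-1)^\rho\, c_{\rho(1)} c_{\rho(2)} \cdots c_{\rho(2q+1)}\Bigr)\, e_{1n},
\]
where $c_1 = 1$ and $c_i = v_{i-1}$ for $2\le i\le 2q+1$. The bracketed sum is precisely the setting of Lemma~\ref{perm}(b) applied with $M = \{2, \ldots, 2q+1\}$ (even cardinality $2q$), $N = \{1\}$, and the partition consisting of the single interval $[2q+1]$ of odd cardinality $2q+1 = m_1+1$ (so $m_1 = 2q$); the lemma delivers the value $(2q)!\, v_1 v_2 \cdots v_{2q}$. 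Hence
\[
s_k(A_1, \ldots, A_k) = (2q)!\, v_1 v_2 \cdots v_{2q}\, e_{1n},
\]
which is nonzero: $v_1 \cdots v_{2q}$ is a standard basis element of $E^m$ because $2q \le m$, and $(2q)! \ne 0$ under the hypothesis that $R$ has characteristic $0$ or $p > 2\lfloor m/2\rfloor$. The main technical step is verifying the Eulerian-path constraint that pins down the contributing permutations, but this is immediate from the vertex in- and out-degrees of the multigraph.
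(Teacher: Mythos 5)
Your proof is correct and follows essentially the same route as the paper's: augment an Amitsur--Levitzki sharpness witness consisting of $2n-1$ matrix units by the $2\lfloor m/2\rfloor$ matrices $v_ie_{11}$, observe that the only surviving permutations shuffle the loops attached to vertex $1$, and evaluate the resulting alternating sum via Lemma~\ref{perm}(b) with the trivial partition, yielding $(2\lfloor m/2\rfloor)!\,v_1\cdots v_{2\lfloor m/2\rfloor}$ times a nonzero matrix unit. The only difference is the choice of Eulerian witness (your staircase $e_{11},e_{12},e_{22},\dots,e_{nn}$ versus the paper's round trip $e_{12},\dots,e_{n-1,n},e_{nn},e_{n,n-1},\dots,e_{21}$), which if anything makes the identification of the contributing permutations slightly cleaner, since your walk is forced to start at vertex $1$ and end at vertex $n$.
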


\begin{proof} Consider the $2n-1$ matrices $e_{12}$, $e_{23}$, \dots, $e_{n-1,n}$, $e_{nn}$, $e_{n, n-1}$, $e_{n-1,n-2}$, \dots, $e_{21}$, together with the $2\lfloor m/2\rfloor$ further matrices $v_ie_{11}$, where $i=1,\dots, 2\lfloor m/2\rfloor$.   The standard polynomial $s_k$ evaluated at these $k$ matrices is the same as $$s_{2\lfloor m/2\rfloor+1}(e_{11}, v_1e_{11}, \dots, v_{2\lfloor m/2\rfloor}e_{11}).$$ By Lemma~\ref{perm}(b), applied to the trivial partition, this is $$(2\lfloor m/2\rfloor)!v_1\cdots v_{2\lfloor m/2\rfloor}e_{11}\ne 0.$$
\end{proof}

\begin{?} Does the standard identity of degree $2(n+\lfloor m/2\rfloor)$ hold in $M_nE^m$?
\end{?}
 For $m=0$, or $m=1$, or $n=1$, or $n=2$, the answer is clearly affirmative.
\section*{Acknowledgements}
I am grateful to the unnamed referee of this paper and to M\'aty\'as Domokos  for  useful  comments.

\end{document}